 \newtheorem{thm}{Theorem}[section]
 \newtheorem{cor}[thm]{Corollary}
 \theoremstyle{definition}
 \newtheorem{defn}[thm]{Definition}
 \theoremstyle{remark}
 \newtheorem{rem}[thm]{Remark}
 \newtheorem*{ex}{Example}
 \numberwithin{equation}{section}
\begin{document}

%-------------------------------------------------------------------------
% editorial commands: to be inserted by the editorial office
%
%\firstpage{1} \volume{228} \Copyrightyear{2004} \DOI{003-0001}
%
%
%\seriesextra{Just an add-on}
%\seriesextraline{This is the Concrete Title of this Book\br H.E. R and S.T.C. W, Eds.}
%
% for journals:
%
%\firstpage{1}
%\issuenumber{1}
%\Volumeandyear{1 (2004)}
%\Copyrightyear{2004}
%\DOI{003-xxxx-y}
%\Signet
%\commby{inhouse}
%\submitted{March 14, 2003}
%\received{March 16, 2000}
%\revised{June 1, 2000}
%\accepted{July 22, 2000}
%
%
%
%---------------------------------------------------------------------------
%Insert here the title, affiliations and abstract:
%

\title[Decompositions generating codimension one manifold factors]
 {Decompositions of $\mathbb{R}^n, n \geq 4,$ into convex sets 
  generate codimension 1 manifold factors}

%----------Author 1

\author[D. M. Halverson]{Denise M. Halverson}

\address{Department of Mathematics\\
Brigham Young University\\
Provo, UT 84602}

\email{halverson@math.byu.edu}

% \thanks{}

%----------Author 2

\author[D. Repov\v s]{Du\v san Repov\v s}
\address{Faculty of Education and\\
Faculty of Mathematics and Physics\\
University of Ljubljana\\
P.O. Box 2964\\
Ljubljana, Slovenia 1001}

\email{dusan.repovs@guest.arnes.si}

%----------classification, keywords, date

\subjclass{Primary 57N15, 57N75; Secondary 57P99, 53C70}

\keywords{Upper semicontinuous decomposition, convex set,
genera\-lized manifold,
cell-like resolution,
general position property,
codimension one mani\-fold factor,
Generalized Moore Problem}

% \date{\today}

%----------additions

% \dedicatory{To my boss}

%%% ----------------------------------------------------------------------

\begin{abstract}
We show that if $G$ is an upper semicontinuous decomposition of
$\mathbb{R}^n$, $n \geq 4$, into convex sets, then the quotient
space $\mathbb{R}^n/G$ is a codimension one manifold factor.  In
particular, we show that $\mathbb{R}^n/G$ has the disjoint arc-disk
property.
\end{abstract}

%%% ----------------------------------------------------------------------
\maketitle
%%% ----------------------------------------------------------------------
%\tableofcontents

\section{Introduction}

A space $X$ is said to be a \emph{codimension one manifold factor}
provided that $X \times \mathbb{R}$ is a manifold.  It is a long
standing unsolved problem as to whether all resolvable generalized
manifolds are codimension one manifold factors \cite{Daverman S}.
This is the so-called Product With a Line Problem and it is the
essence of the famous Generalized R.~L.~Moore Problem
\cite{Bertinoro, Moore1, Moore2}.

The Product With a Line Problem
speaks directly to one of the most fundamental questions in
geometric topology, which is how to recognize manifolds
\cite{Cannon, Edwards 2, Repovs1, Repovs2, Repovs3}. Because
manifolds have a rich structure which is useful to exploit in many
areas of mathematics and its applications, it is important to
recognize when one is dealing with a space that is a manifold.  One
notable example is the relevance of the Product With a Line Problem
to the famous Busemann Conjecture in metric geometry
\cite{Busemann1,Busemann2,Busemann3, HaRe}.

One might wonder even if a decomposition of $\mathbb{R}^n$ into
convex sets could  give rise to a decomposition space topologically
distinct from $\mathbb{R}^n$. This problem was investigated for
several years beginning with  Bing in the 1950's
\cite{Armentrout,Bing,CannonLO,Eaton, McAuley}. In 1970, Armentrout \cite{Armentrout} 
produced the first example of a decomposition of $\mathbb{R}^3$ into convex sets that yields a non-manifold.
Then in 1975, Eaton \cite{Eaton}
 demonstrated
 that a certain decomposition of
$\mathbb{R}^3$ into points and straight line segments, originally
proposed by Bing \cite{Bing}, is
indeed topologically distinct from
$\mathbb{R}^3$.  Hence, this type of complexity is
significant.  It should also be noted that there are no known 
examples of a non-manifold resulting from a decomposition of $\mathbb{R}^{n\geq4}$ into convex sets.

In this paper we shall investigate how the type of complexity
represented by decompositions of $\mathbb{R}^n$ into convex sets can
affect the classification of a decomposition space as a codimension
one manifold factor. We shall demonstrate that decompositions of
$\mathbb{R}^n$, $n \geq 4$, into convex sets are always codimension
one manifold factors. In particular, we shall show that such spaces
have a particularly strong general position property, the disjoint
arc-disk property.

\section{Preliminaries}

We briefly review some basic definitions and notations.  Recall that
a map $f: X \to Y$ is said to be \emph{proper} if whenever $C$ is a compact
subset of $Y$, then $f^{-1}(C)$ is compact.

There are various equivalent definitions of upper semicontinuous
decompositions \cite{Daverman book}, but the following will be the
most useful for our purposes:

\begin{defn}
A decomposition $G$ of $M$ into compact sets is said to be
\emph{upper semicontinuous}
(usc)
if and only if the associated
decomposition map $\pi: M \to M/G$ is a proper map.
\end{defn}

A compact subset $C$ of a space $X$ is said to be \emph{cell-like}
if for each neighborhood $U$ of $C$ in $X$, $C$ can be contracted to
a point
inside
$U$ \cite{MiRe}. A usc decomposition $G$ of $M$ is said to be
 \emph{cell-like} if each element $g \in G$ is
cell-like. A map $f: Y \to X$ is said to be
\emph{cell-like}
if
for each $x \in X$, $f^{-1}(x)$ is cell-like. A \emph{resolvable
generalized n-manifold} is an $n$-dimensional space $X$ that is the
image of a cell-like map $f:M \to X$ where $M$ is an $n$-manifold.

Convex sets are contractible, and hence they are cell-like. Thus, a
usc decomposition $G$ of $\mathbb{R}^n$ into convex sets is a
cell-like decomposition and the associated decomposition map $\pi:
\mathbb{R}^n \to \mathbb{R}^n/G$ is a cell-like map.  The fact that
$\mathbb{R}^n/G$ is finite-dimensional follows from a result of
Zemke (see \cite[Theorem 5.2]{Zemke}). Therefore, in this setting,
$\mathbb{R}^n/G$ is a resolvable generalized $n$-manifold.

For resolvable generalized manifolds, we have the following very
useful approximate lifting theorem, which follows from \cite[Theorem
17.1 and Corollary 16.12B]{Daverman book}:

\begin{thm} \label{ALT} Suppose that $G$ is a cell-like
decomposition of a manifold $M$,
with decomposition map $\pi: M \to
M/G $, and that
the quotient space
$M/G$ is finite-dimensional.
Then for any map $f: Z \to
M/G$  of a finite-dimensional compact polyhedron $Z$,
and any $\epsilon >0$,
there exists
a map $F:Z \to M$ such that $\pi F$
is an $\epsilon$-approximation of $f$.
\end{thm}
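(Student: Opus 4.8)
The plan is to deduce the statement from the fact that a cell-like map onto a finite-dimensional space admits fine approximate sections; once such a section is available, the required lift is obtained by a single composition. I would first record the ambient structure: since $M$ is a manifold it is a locally compact ANR, and since $G$ is cell-like with $M/G$ finite-dimensional, the decomposition map $\pi: M \to M/G$ is a proper cell-like surjection and $M/G$ is itself a finite-dimensional ANR. These are precisely the hypotheses under which the cited results of Daverman apply.

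The key structural input I would invoke is that such a $\pi$ is a fine homotopy equivalence: for every open cover $\mathcal{U}$ of $M/G$ there is a map $g: M/G \to M$ with $\pi g$ $\mathcal{U}$-close to the identity (indeed $\mathcal{U}$-homotopic to it). Concretely, given $\epsilon > 0$, I would let $\mathcal{U}$ be a cover of a compact neighborhood of $f(Z)$ by sets of diameter less than $\epsilon$ and extract a map $g$ with $d(\pi g(x), x) < \epsilon$ for every $x$ in that neighborhood.

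With $g$ in hand the lift is immediate: set $F = g \circ f: Z \to M$. For each $z \in Z$ we then have $d(\pi F(z), f(z)) = d(\pi g(f(z)), f(z)) < \epsilon$, so $\pi F$ is the desired $\epsilon$-approximation of $f$. In this formulation the hypotheses on $Z$ (finite-dimensional compact polyhedron) are not needed for the final composition itself, but rather to justify the mechanism producing the approximate section.

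The main obstacle is precisely the existence of this fine approximate section, i.e. the fine homotopy equivalence property of $\pi$, and this is where finite-dimensionality of $M/G$ is essential. It permits a skeleton-by-skeleton lifting: one first lifts the vertices of a fine triangulation to chosen preimages, then extends over successive skeleta, using that each point-preimage is cell-like, hence of trivial shape, to push maps of cell boundaries across cells while keeping their $\pi$-images inside prescribed small neighborhoods. Controlling these extensions simultaneously over all (finitely many) skeleta so that the accumulated error stays below $\epsilon$ is the delicate part, and it is exactly this package that the cited approximate lifting machinery supplies.
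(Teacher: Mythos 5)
Your proposal is correct and follows essentially the same route as the paper, which offers no independent argument but derives the statement from Daverman's results that $M/G$ is a finite-dimensional ANR and that $\pi$ is consequently a fine homotopy equivalence; composing an approximate section $g$ with $f$ is exactly the intended mechanism. Your observation that the hypotheses on $Z$ are not needed once the global approximate section exists, and that the real content (the skeletal lifting argument) lives in the cited machinery, is accurate.
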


%The disjoint arc-disk property is not necessary for a space to be a
%codimension one manifold factor \cite{Cannon-Daverman2,
%Daverman-Walsh,Halverson 1}.

General position properties are very useful in detecting codimension
one manifold factors \cite{Daverman-Halverson 2,Halverson
1,Halverson 2,Halverson 3, HaRe2}.  For our results, we shall only
need to employ the following:

\begin{defn}
A space $X$ is said to have the \emph{disjoint arc-disk property}
(DADP) provided that any two maps $\alpha: I \to X$ and $f: D^2 \to
X$ can be approximated by maps with disjoint images, where $I$
denotes the unit interval and $D^2$ denotes a disk.
\end{defn}

\noindent The following theorem was demonstrated in \cite[Proposition
2.10]{Daverman 1}:

\begin{thm} \label{DADP thm} A resolvable generalized manifold having DADP is a codimension
one manifold factor.\end{thm}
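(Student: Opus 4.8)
The plan is to deduce the conclusion from the cell-like approximation theorem of Edwards (the manifold recognition theorem), which detects manifolds among generalized manifolds via a general position property. First I would check that the product carries the right global structure: if $f \colon M \to X$ is a cell-like resolution of the resolvable generalized $n$-manifold $X$ by an $n$-manifold $M$, then $f \times \mathrm{id}_{\mathbb{R}} \colon M \times \mathbb{R} \to X \times \mathbb{R}$ is again a cell-like map from the $(n+1)$-manifold $M \times \mathbb{R}$, so $X \times \mathbb{R}$ is a resolvable generalized $(n+1)$-manifold. In the case of interest the dimension satisfies $n+1 \geq 5$, so Edwards' theorem applies: a resolvable generalized $m$-manifold with $m \geq 5$ is a topological manifold if and only if it has the disjoint disks property (DDP). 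The entire problem thus reduces to verifying that $X \times \mathbb{R}$ has the DDP.

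The heart of the argument is to extract the DDP of $X \times \mathbb{R}$ from the DADP of $X$, and this is where I expect the main difficulty to lie. Given two maps $F, G \colon D^2 \to X \times \mathbb{R}$, write $F = (f, \phi)$ and $G = (g, \gamma)$ with $f, g \colon D^2 \to X$ and $\phi, \gamma \colon D^2 \to \mathbb{R}$. The two images coincide exactly on the set of pairs $(x, y)$ with $f(x) = g(y)$ and $\phi(x) = \gamma(y)$, so the goal is to perturb $F$ and $G$ until this set is empty. The idea is to spend the extra $\mathbb{R}$-coordinate to lower a dimension: after putting $\phi$ and $\gamma$ in general position as maps to the line, the equal-level locus $\{(x,y) : \phi(x) = \gamma(y)\}$ becomes codimension one, and slicing by the level $t \in \mathbb{R}$ presents the part of $G$ that can still meet $F$ at level $t$ as the $X$-image of a $1$-complex, i.e. essentially as an arc in $X$. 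Separating such an arc from the disk $f(D^2)$ is precisely what the DADP provides; the dimension bookkeeping matches, since an arc and a disk in the $n$-dimensional space $X$ have expected coincidence dimension $1 + 2 - n = 3 - n$, exactly the expected coincidence dimension $2 + 2 - (n+1) = 3 - n$ of two disks in $X \times \mathbb{R}$. This confirms that the DADP is the natural hypothesis for the reduction.

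To make the level-wise separation honest I would pass to the resolution. Since $X$ is only a generalized manifold, general position and transversality are not directly available in $X$ itself; instead I would invoke the approximate lifting Theorem~\ref{ALT} to lift the relevant maps $\epsilon$-closely to the genuine manifold $M$ (or $M \times \mathbb{R}$), carry out the slicing and the arc-versus-disk adjustments there, where transversality is legitimate, and then project the adjusted maps back down by the cell-like map, controlling the error by $\epsilon$. The principal obstacle, and the step requiring the most care, is assembling the separations obtained level by level into a single globally continuous $\epsilon$-small perturbation of $F$ and $G$: the arcs produced as $t$ varies sweep out a full disk, so one cannot simply push all of them off $f(D^2)$ at once (that would demand the DDP of $X$, which need not hold). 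The parameter $t$ must be retained and the DADP applied compatibly across levels, so that separation is achieved only where the $\mathbb{R}$-levels actually match. Once this parametrized separation is arranged, the resulting maps have disjoint images, establishing the DDP and hence, via Edwards' theorem, that $X \times \mathbb{R}$ is a manifold.
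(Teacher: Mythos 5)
The paper does not prove this statement itself; it quotes it from \cite[Proposition 2.10]{Daverman 1}, so the only comparison available is with that source. Your outer reduction matches the standard one: $X\times\mathbb{R}$ is a resolvable generalized $(n+1)$-manifold, Edwards' cell-like approximation theorem identifies manifolds among these (in dimension at least $5$) by the disjoint disks property, and everything comes down to verifying the DDP for $X\times\mathbb{R}$. Your dimension count explaining why the DADP is the natural hypothesis is also sound, as is the observation that the statement is really being used for $n\geq 4$ so that $n+1\geq 5$.

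The genuine gap is the step you yourself flag and then skip: converting the level-by-level applications of the DADP into a single continuous $\epsilon$-small perturbation. Writing ``once this parametrized separation is arranged, the resulting maps have disjoint images'' leaves unproved the one claim that carries all the content of the theorem. Concretely: the level sets $\gamma^{-1}(t)$ sweep out all of $D^2$ as $t$ varies; the DADP hands you, for each \emph{fixed} $t$, a perturbation of the arc $g|_{\gamma^{-1}(t)}$ and, as defined, possibly of the disk map as well, so you must also prevent the disk-side perturbations from conflicting across uncountably many levels; and nothing in your argument produces continuity in $t$ or prevents the adjustment chosen at level $t$ from re-introducing intersections at nearby levels $t'$. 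The standard repair, which is what the cited proof carries out, is to discretize: make $\phi$ and $\gamma$ simplicial with respect to fine triangulations so that only finitely many small $2$-simplices $\sigma$ (with $\gamma(\sigma)$ a short interval) and finitely many $1$-complexes $\phi^{-1}(t)$ have to be treated, apply the DADP finitely many times to the resulting arc/disk pairs, and patch the finitely many adjustments together along the $1$-skeleton using $0$-LCC-type taming (in the spirit of Theorem \ref{0-LCC} and Corollary \ref{path}). That bookkeeping is precisely what separates the DADP hypothesis from the (generally false) assertion that $X$ itself has the DDP, and it is absent from your proposal. A secondary point: the detour through the approximate lifting theorem to perform transversality in the resolving manifold is unnecessary, since the only general position required is for the real-valued coordinates $\phi,\gamma\colon D^2\to\mathbb{R}$, which is elementary PL general position in the domain.
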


Useful in discussions of the DADP is the local $0$-co-connectedness
property. A set $Z \subset X$ is said to have the \emph{local
$0$-co-connectedness property} ($0$-LCC) in $X$ if for every $z \in
Z \cap \text{Cl}(X-Z)$, each neighborhood $U$ of $z$ contains
another neighborhood $V$ of $z$ so that any two points in 
$V-Z$
are
path connected in 
$U-Z$.
Note that if $Z$ is nowhere dense in $X$,
then $Z =Z \cap \text{Cl}(X-Z)$.

The following theorem can be found in \cite[Corollary
26.2A]{Daverman book}:

\begin{thm} \label{0-LCC}
Each $k$-dimensional closed subset of a generalized $n$-manifold,
where $k \leq n-2$, is $0$-LCC.
\end{thm}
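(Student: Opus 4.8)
The plan is to reduce the $0$-LCC property to a purely local non-separation statement and then to settle that statement by dimension theory. Throughout, write $X$ for the generalized $n$-manifold and $Z \subseteq X$ for the closed subset with $\dim Z = k \le n-2$, and fix a point $z \in Z \cap \mathrm{Cl}(X - Z)$ together with a neighborhood $U$ of $z$. I want to produce a neighborhood $V \subseteq U$ of $z$ such that any two points of $V - Z$ are joined by a path in $U - Z$; in fact I will arrange that they are already joined inside $V - Z$.

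The first ingredient is the local structure of $X$. A generalized $n$-manifold is a finite-dimensional, locally compact ANR, hence locally contractible; in particular $X$ is locally connected, and every open subset of $X$ --- such as $V - Z$ --- is locally path connected, so for these sets connectedness and path connectedness coincide. Using local connectedness I would choose $V \subseteq U$ to be a small connected neighborhood of $z$; being a connected open subset of a generalized $n$-manifold, $V$ is itself a connected generalized $n$-manifold, and $Z \cap V$ is closed in $V$ with $\dim(Z \cap V) \le n-2$. Since $z \in \mathrm{Cl}(X - Z)$, the set $V - Z$ is nonempty, and the whole problem collapses to showing that $V - Z$ is connected, i.e.\ that the codimension-$(\ge 2)$ closed set $Z \cap V$ does not separate $V$.

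The second ingredient, and the heart of the matter, is exactly this non-separation. Over $\mathbb{R}^n$ it is the classical theorem of dimension theory that a closed subset of dimension at most $n-2$ does not separate $\mathbb{R}^n$; the content here is the generalized-manifold analogue. I would establish it through the local homology of $X$: because $H_*(X, X - x)$ agrees with $H_*(\mathbb{R}^n, \mathbb{R}^n - 0)$ at every point, $X$ is a cohomology $n$-manifold, and separation of $V$ by the closed set $Z \cap V$ is detected by an $(n-1)$-dimensional \v{C}ech cohomology class via Alexander-type duality. Since the cohomological dimension of $Z \cap V$ is bounded by $\dim(Z \cap V) \le n-2$, the relevant cohomology vanishes, so $Z \cap V$ cannot separate $V$; hence $V - Z$ is connected, and by local path connectedness it is path connected. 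Any two points of $V - Z$ are then joined by a path in $V - Z \subseteq U - Z$, which is the desired conclusion.

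I expect the main obstacle to be making this non-separation argument rigorous intrinsically, without genuine Euclidean coordinates on $X$. The two delicate points are: choosing the neighborhoods $V$ small and ``ball-like'' enough that the local duality computation applies cleanly (so that the cohomological separation criterion is available and the bound by cohomological dimension is meaningful), and controlling the passage from cohomological non-separation to honest path connectedness, for which the ANR structure of $X$ is essential. Once these are in place the dimension hypothesis $k \le n-2$ enters in exactly one spot --- forcing the $(n-1)$-dimensional obstruction to vanish --- and the $0$-LCC property follows.
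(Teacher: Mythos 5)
The paper gives no proof of this theorem at all --- it is quoted directly from Daverman's book as Corollary 26.2A --- so there is no in-paper argument to compare against; your sketch is the standard duality proof underlying that citation (reduce $0$-LCC to local non-separation of a small connected $V$, kill the separating obstruction because the cohomological dimension of $Z\cap V$ is at most $n-2$, and recover path connectedness from the ANR structure), and its outline is correct. The one step you would still need to write out carefully is the duality isomorphism $H_1(V,V-Z)\cong \check{H}^{n-1}_c(Z\cap V)$ for a connected open set $V$ in a generalized $n$-manifold, fed into the exact sequence of the pair $(V,V-Z)$ to conclude $\tilde{H}_0(V-Z)=0$; as you note yourself, that is where the real work lives, but it is a known property of generalized manifolds rather than a gap in the idea.
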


\noindent Since a $k$-dimensional closed subset of a generalized
$n$-manifold $X$, where $k \leq n-1$, is nowhere dense in $X$, we
have the following corollary: %\cite[Lemma 26.2]{Daverman book}

\begin{cor} \label{path}
If $Z$ is a $k$-dimensional closed subset of a generalized
$n$-manifold $X$, where $k \leq n-2$, then any path $\alpha: I \to
X$ can be approximated by a path  $\alpha': I \to X-Z$.
\end{cor}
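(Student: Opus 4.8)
The plan is to combine the $0$-LCC property supplied by Theorem~\ref{0-LCC} with the density of $X-Z$ in order to reroute the path through the complement of $Z$, using a finite partition of $I$ adapted to the local connectivity data. First I would record the two local ingredients. Since $Z$ has dimension $k \leq n-2$, Theorem~\ref{0-LCC} shows that $Z$ is $0$-LCC in $X$, and since $Z$ is nowhere dense we have $Z = Z \cap \mathrm{Cl}(X-Z)$, so the $0$-LCC condition is available at every point of $Z$. In addition, because a generalized manifold is an ANR and hence locally path connected, the open set $X-Z$ is itself locally path connected. Combining these, I claim that every point $p \in X$ and every neighborhood $U$ of $p$ admit a smaller neighborhood $V$ such that any two points of $V-Z$ are joined by a path in $U-Z$: for $p \in Z$ this is exactly the $0$-LCC property, while for $p \notin Z$ one takes $V \subset U \cap (X-Z)$ to be path connected.

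Next I would set up the approximation. Fix $\epsilon > 0$; after replacing $\epsilon$ by $\epsilon/2$ at the end it suffices to produce $\alpha'$ with image in $X-Z$ lying within $2\epsilon$ of $\alpha$. For each $t \in I$ apply the local property to $p = \alpha(t)$ and $U_t = B(\alpha(t),\epsilon)$ to obtain a neighborhood $V_t$. The sets $\alpha^{-1}(V_t)$ form an open cover of the compact interval $I$, so by the Lebesgue number lemma there is a partition $0 = t_0 < t_1 < \cdots < t_m = 1$ such that each $\alpha([t_{i-1},t_i])$ lies in a single $V_{\tau_i}$. Using that $X-Z$ is dense (because $Z$ is nowhere dense), I would choose points $x_i \in X-Z$ for $0 \le i \le m$ so that $x_0 \in V_{\tau_1}$, $x_m \in V_{\tau_m}$, and, for $1 \le i \le m-1$, $x_i$ lies in the nonempty open set $V_{\tau_i} \cap V_{\tau_{i+1}}$ (which contains $\alpha(t_i)$).

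Then I would connect consecutive chosen points and concatenate. For each $i$ both $x_{i-1}$ and $x_i$ lie in $V_{\tau_i} - Z$, so the local property yields a path $\beta_i$ from $x_{i-1}$ to $x_i$ inside $U_{\tau_i} - Z = B(\alpha(\tau_i),\epsilon) - Z$. Reparametrizing $\beta_i$ on $[t_{i-1},t_i]$ and concatenating produces a well-defined path $\alpha': I \to X-Z$, since the endpoints agree at each $x_i$. Finally, for $t \in [t_{i-1},t_i]$ both $\alpha(t)$ and $\alpha'(t)$ lie in $B(\alpha(\tau_i),\epsilon)$, whence $\alpha'$ is within $2\epsilon$ of $\alpha$, as required.

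I expect the main obstacle to be the first paragraph: upgrading the $0$-LCC property, which is asserted only at points of $Z$, to a uniform statement about joining nearby points of $X-Z$ by short paths missing $Z$, and in particular confirming that $X$ is locally path connected so that the points $p \notin Z$ cause no difficulty. Once this uniform local connectivity off $Z$ is in hand, the partition-and-concatenate argument is routine.
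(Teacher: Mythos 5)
Your argument is correct and takes exactly the intended route: the paper offers no explicit proof, deducing the corollary in one line from Theorem~\ref{0-LCC} together with the observation that $Z$ is nowhere dense, and your partition-and-concatenate construction (with the $0$-LCC property handling points of $Z$ and local path connectedness of the ANR $X$ handling points off $Z$) is precisely the standard argument being invoked.
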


\section{Main Results}

The main result of this paper is the following theorem:

\begin{thm}
Let $G$ be an upper semicontinuous decomposition of $\mathbb{R}^n$
into convex sets, where $n \geq 4$.  Then $\mathbb{R}^n/G$ is a
codimension one manifold factor.
\end{thm}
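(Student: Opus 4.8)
The plan is to reduce the main theorem to verifying the disjoint arc-disk property (DADP) for $X = \mathbb{R}^n/G$, since by Theorem~\ref{DADP thm} a resolvable generalized manifold with DADP is a codimension one manifold factor, and we already know from the Preliminaries that $\mathbb{R}^n/G$ is a resolvable generalized $n$-manifold. So the entire problem becomes: given an arc $\alpha \colon I \to X$ and a disk $f \colon D^2 \to X$, approximate them by maps with disjoint images.

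The strategy I would pursue exploits the convexity of the decomposition elements together with the approximate lifting theorem (Theorem~\ref{ALT}). The key structural observation is that the set $N$ of nondegenerate elements of $G$ — or more precisely its image $\pi(N) \subset X$ — should be controllable in dimension. Convex sets in $\mathbb{R}^n$ have a well-defined dimension, and an upper semicontinuous decomposition into convex sets cannot have too many high-dimensional elements clustered together; one expects the "bad set" where $\pi$ fails to be a homeomorphism to be small. The plan is to push the arc off the troublesome part of $X$ first. Using Corollary~\ref{path}, any path can be approximated into the complement of a closed subset of codimension at least two; so I would identify an appropriate closed set $Z \subset X$ of dimension $\le n-2$ that contains the obstruction to general position, move $\alpha$ into $X - Z$, and arrange that on $X - Z$ the disk $f$ can be lifted and perturbed via Theorem~\ref{ALT} to achieve disjointness.

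More concretely, I would lift the disk approximately to $F \colon D^2 \to \mathbb{R}^n$ so that $\pi F$ is close to $f$, perform a general-position adjustment upstairs in the manifold $\mathbb{R}^n$ where general position is classical, and then project back down, using the convex (hence cell-like and contractible) structure of the fibers to control how the projection can create new intersections. The convexity is what should let me say that intersections downstairs come only from fibers that are genuinely nondegenerate, and that these can be organized along a low-dimensional set that the arc avoids. The interplay of the arc (dimension $1$) and disk (dimension $2$) against the ambient dimension $n \ge 4$ gives the numerical room: $1 + 2 < n$, so generically an arc and a disk in an $n$-manifold miss each other, and the task is to show the decomposition does not destroy this.

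\textbf{The main obstacle} I anticipate is controlling the \emph{nondegenerate elements} of the decomposition: unlike a single low-dimensional closed set, the collection of convex cells can vary in dimension from point to point and need not assemble into a subset of codimension two. The heart of the proof must therefore be a careful analysis of where and how the disk's image can meet nondegenerate convex fibers, and showing that after lifting and a generic perturbation upstairs the resulting intersections with the arc project to a set that can be eliminated — in effect, proving that convexity forces the "shadow" of the nondegenerate elements relevant to an arc–disk collision to behave like a codimension-two (or smaller) set. Establishing this dimension-counting control, presumably by stratifying the nondegenerate elements by dimension and handling each stratum with the $0$-LCC machinery of Theorem~\ref{0-LCC} and Corollary~\ref{path}, is where the real work lies; the lifting and general-position steps are comparatively standard once that control is in hand.
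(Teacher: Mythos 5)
Your outer framework matches the paper's: reduce to the DADP via Theorem~\ref{DADP thm}, lift the disk approximately via Theorem~\ref{ALT}, and use Corollary~\ref{path} to move the arc off a low-dimensional closed set. But the step you yourself flag as ``where the real work lies'' is a genuine gap, and the route you sketch for closing it --- stratifying the nondegenerate elements by dimension and arguing that the ``shadow'' of the nondegenerate set relevant to an arc--disk collision has codimension two --- is not how the argument goes and is unlikely to succeed as stated. The union of nondegenerate elements of $G$ can be all of $\mathbb{R}^n$, and its image in $\mathbb{R}^n/G$ need not have dimension $\le n-2$; moreover, general position upstairs between lifts of the arc and the disk does not directly help, because $\pi$ re-identifies points: two disjoint sets in $\mathbb{R}^n$ have intersecting images whenever they meet a common decomposition element, and the family of elements meeting the lifted disk is not controlled by the $1+2<n$ count.

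The missing idea is that one never needs to control the nondegenerate set globally; one shows instead that the image of the lifted disk is itself at most $2$-dimensional, after which the arc avoids it outright. Take a piecewise linear approximate lift $F\colon D^2 \to \mathbb{R}^n$ and set $f' = \pi F$. Triangulate $F(D^2)$; for each $2$-simplex $\sigma$, let $G_\sigma$ be the decomposition induced over $\pi(\sigma)$ and restrict its decomposition map to the $2$-plane $P$ containing $\sigma$. This restriction is a proper map determining a usc decomposition of the plane into convex sets --- elements that do not separate the plane --- so Moore's theorem makes it a near-homeomorphism onto its image, and $\pi(\sigma)$ is at most $2$-dimensional. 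By the countable sum theorem the whole image $f'(D^2)$ is at most $2$-dimensional, and since $2 \le n-2$ for $n \ge 4$, Corollary~\ref{path} pushes the arc off $f'(D^2)$ entirely. Convexity enters only through this planar-section argument (convex sets meet a $2$-plane in convex, hence non-separating, sets), not through any dimension count on the nondegenerate elements.
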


\noindent This theorem will follow immediately as a corollary of
Theorem \ref{DADP thm} and the following theorem:

\begin{thm} \label{DADP-4 thm}
Let $G$ be an upper semicontinuous decomposition of $\mathbb{R}^n$
into convex sets, where $n \geq 4$.  Then $\mathbb{R}^n/G$ has the
DADP.
\end{thm}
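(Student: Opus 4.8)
The plan is to reduce the required disjointness to a single application of Corollary \ref{path}, after first replacing the disk by a map whose image is at most $(n-2)$-dimensional. The point is that the arc, being one-dimensional, is the flexible object: it can be pushed off any closed set of codimension two by Corollary \ref{path}, which rests only on the $0$-LCC conclusion of Theorem \ref{0-LCC}. Pushing the \emph{disk} off a set, by contrast, would require a $1$-LCC type property that is not available \emph{a priori} in a generalized manifold, so I would avoid moving the disk against the arc and instead move the arc against the disk.

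First I would handle the disk. Given $f\colon D^2 \to \mathbb{R}^n/G$ and $\epsilon>0$, Theorem \ref{ALT} produces $F\colon D^2 \to \mathbb{R}^n$ with $\pi F$ an $\epsilon/2$-approximation of $f$. Since $\mathbb{R}^n$ is a PL manifold, I may replace $F$ by a PL map $F'$ so close to $F$ that $\pi F'$ still approximates $f$; then $C:=F'(D^2)$ is a compact polyhedron with $\dim C \le 2$, and I set $f':=\pi F'$, so that $f'(D^2)=\pi(C)$. The crux is then to prove $\dim \pi(C) \le n-2$, and this is exactly where convexity enters. I would partition $\pi(C)$ according to the dimension of the elements of $G$ meeting $C$. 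The images of the elements of dimension $\ge 2$ lie in $\pi\bigl(\bigcup\{g\in G: \dim g \ge 2\}\bigr)$, whose dimension is at most $n-2$; this estimate reflects the fact that a usc family of disjoint convex bodies of dimension $\ge k$ collapses, in the quotient, like a foliation by $k$-dimensional leaves. The images of the degenerate elements and of the $1$-dimensional (segment) elements are governed by a different mechanism: because the elements of $G$ are pairwise disjoint, distinct elements meet the $2$-complex $C$ in disjoint subsets, so their images are indexed by disjoint pieces of $C$ and thus contribute dimension at most $\dim C \le 2$. Since $n \ge 4$, both bounds give $\dim \pi(C)\le n-2$.

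With this in hand the proof concludes quickly. As $C$ is compact, $\pi(C)=f'(D^2)$ is a closed subset of the generalized $n$-manifold $\mathbb{R}^n/G$ of dimension at most $n-2$, so Corollary \ref{path} produces an arc $\alpha'\colon I \to (\mathbb{R}^n/G)\setminus f'(D^2)$ approximating $\alpha$. Then $\alpha'$ and $f'$ approximate $\alpha$ and $f$ and have disjoint images, which is precisely the DADP.

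The main obstacle is the dimension estimate $\dim \pi(C)\le n-2$, and within it the contribution of the segments, which are exactly the elements responsible for the Bing--Eaton non-manifold phenomena. The subtlety is genuine: the images of \emph{all} segments of $G$ can form an $(n-1)$-dimensional set (as in a foliation of $\mathbb{R}^n$ by parallel segments), so the estimate must use that only the segments meeting the \emph{two}-dimensional complex $C$ are relevant, and it is the disjointness of the convex elements that confines these to a $2$-dimensional family. Turning the heuristic ``indexed by disjoint pieces of $C$'' into a rigorous dimension bound, and arranging the attendant closedness and selection technicalities so that Corollary \ref{path} applies, is the heart of the argument.
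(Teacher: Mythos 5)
Your reduction is exactly the paper's: approximately lift the disk to a PL map $F$ into $\mathbb{R}^n$, argue that $f'=\pi F$ has low-dimensional image, and then use Corollary \ref{path} to push the (flexible, one-dimensional) arc off the closed set $f'(D^2)$. That part is sound, and you are right that this is where the weight of the proof lies. But the step you yourself flag as ``the heart of the argument'' --- the dimension estimate for $\pi(C)$ --- is not merely left as a technicality; the heuristic you offer for it does not work. The claim that the images of the degenerate and segment elements ``are indexed by disjoint pieces of $C$ and thus contribute dimension at most $\dim C$'' amounts to asserting that the quotient of a $2$-dimensional compactum by a usc decomposition into compacta is at most $2$-dimensional. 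That is false in general: $\pi$ restricted to $C$ is a quotient map whose point-preimages are the sets $g\cap C$, which need not be cell-like or even connected, and usc quotients of low-dimensional compacta can raise dimension. Disjointness of the elements of $G$ alone gives you nothing here. Likewise your bound for the elements of dimension $\geq 2$ (``collapses like a foliation'') is an unproved assertion about the global nondegeneracy set, stronger than what is needed and not obviously available.

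The missing idea --- and the place where convexity actually does its work in the paper --- is Moore's planar theorem. Triangulate the PL image $F(D^2)$ and fix a $2$-simplex $\sigma$ lying in a $2$-plane $P\subset\mathbb{R}^n$. The elements of $G$ meet $P$ in convex subsets of $P$, hence in compact connected sets that do not separate $P$; so the induced decomposition of $P$ is a usc decomposition of the plane into non-separating continua, and by Moore's theorem its quotient map is a near-homeomorphism onto its image. Consequently $\pi(\sigma)$ is homeomorphic to a subset of the plane and is at most $2$-dimensional, and summing over the finitely many $2$-simplices gives $\dim f'(D^2)\leq 2\leq n-2$. This is a genuinely two-dimensional phenomenon (it is why the simplices of a PL approximation are used, rather than the set $C$ all at once), and without it your argument has no mechanism for controlling the images of the segment elements --- which, as you correctly observe, are exactly the elements responsible for the Bing--Eaton pathology.
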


\begin{proof} Let $f: D^2 \to \mathbb{R}^n/G$ and $\varepsilon > 0$.
It follows from Corollary \ref{path} that it suffices to show that
there is an $\epsilon$-approximation $f': D^2 \to \mathbb{R}^n/G$ of
$f$ such that $f'(D^2)$ is $2$-dimensional.

Let $F: D^2 \to \mathbb{R}^n$ be a piecewise linear map, that is an
${\varepsilon}$-approximate lift of $f$. We shall show that $f' = \pi
F$ is then the
desired map.

Let $T$ denote a triangulation of $F(D^2)$. We claim that if
$\sigma$ is a $2$-simplex of $T$, then $f'(\sigma)$ is
$2$-dimensional. To see this, let $G_{\sigma}$ be the decomposition
induced over $\pi(\sigma)$, i.e. $G_{\sigma}$ is the decomposition
of $\mathbb{R}^n$ having as the only nontrivial elements, the
nontrivial elements of $G$ that meet $\sigma$.  Let $\omega:
\mathbb{R}^n \to \mathbb{R}^n/G_{\sigma}$ be the associated
decomposition map. Note that $\omega$ is necessarily a proper map,
being a decomposition induced over a closed set in the decomposition
space of a usc decomposition.

Let $P$ be the $2$-dimensional plane in  $ \mathbb{R}^n$ that contains $\sigma$. 
Let $\varpi$ denote the restriction of $\omega$ to $P$. Then $\varpi$ is also a
proper map.  Thus $\varpi$ determines a usc decomposition of the
plane into convex sets, elements that do not separate the plane. It
now follows from a classical result of Moore \cite{Moore1,Moore2},
that $\varpi$ is a near-homeomorphism onto its image. Thus
$\varpi(\sigma)$ is at most $2$-dimensional. 

But $\varpi(\sigma)$ is
homeomorphic to $\omega(\sigma)$, which in turn is homeomorphic to
$\pi(\sigma)$. Thus $\pi(\sigma)$ is at most $2$-di\-men\-sional
subset of $\mathbb{R}^n/G$. Hence $$f'(D^2) = \underset{\sigma \in
T^{(2)}}{\bigcup}\pi(\sigma)$$ is a $2$-dimensional subset of the
generalized $n$-manifold $\mathbb{R}^n/G$ \cite{HW}.
\end{proof}

\section{Conclusions}

As we have seen, the complexity represented by decompositions into
convex sets does not inhibit a decomposition space from being a
codimension one manifold factor. The fact that such spaces satisfy
the DADP is a pleasant result. 

It is well known that not all
codimension one manifold factors satisfy the DADP, and hence the
DADP is not a general position property that provides a
characterization of codimension one manifold factors. In fact, the
DADP condition is a
relatively weak tool for detecting codimension one
manifold factors, compared to other general position properties such
as:
\begin{itemize}
\item the disjoint homotopies property \cite{Halverson 1};
\item the plentiful $2$-manifolds property \cite{Halverson 1};
\item the $0$-stitched disks property \cite{Halverson 3};
\item the method of $\delta$-fractured maps \cite{Halverson 2}; and
\item the disjoint topographies (or disjoint concordance) property
\cite{Daverman-Halverson 2, HaRe2}. 
\end{itemize}
It is these stronger properties
that must be utilized to demonstrate that spaces such as the Totally
Wild Flow \cite{Cannon-Daverman2} and the Ghastly Spaces
\cite{Daverman-Walsh} are codimension one manifold factors.

In conclusion, we have demonstrated that we must look to other types
of complexities to realize a counterexample to the Generalized R.~L.~Moore
Problem, if  such an example does indeed exist.

% ------------------------------------------------------------------------

\subsection*{Acknowledgment}
Supported by
the Slovenian Re\-search Agency grants
BI-US/11-12/023,
P1-0292-0101, J1-2057-0101 and
J1-4144-0101.
We thank the referee for several comments and suggestions.

\end{document}